\renewcommand{\R}{\mathbb{R}}
\renewcommand{\B}{{\cal B}}
\newcommand{\id}{\rm{id}}
\newcommand{\tr}{\rm{tr}}
\newcommand{\diag}{\rm{diag}}
\newcommand{\suctres}[1]{{#1_{1},#1_{2},#1_{3}}}
\newcommand{\col}[3]
{\left( \begin{array}{c} {#1}\\ {#2}\\ {#3}\\
\end{array}\right)}
\begin{document}

\tittle{Matrices de rotaciones, simetr\'{\i}as y roto--simetr\'{\i}as}

\def\authors{\aaa{Mar\'{\i}a Jes\'{u}s  D{\surname E LA PUENTE}
}}

\def\direc{\address{
Departamento de \'{A}lgebra\\
Facultad de Matem\'{a}ticas\\
Universidad Complutense\\
28040 Madrid, Spain \\
mpuente@ucm.es}}

\maketitle

\begin{center}
{\em Dedication}
\end{center}

\vspace{0.1cm}

\begin{abstract}
\noindent In this note we find the  orthogonal matrices $R,S\in M_3(\mathbb{R})$ corresponding to the clockwise rotation $r$  in $\mathbb{R}^3$ around the axis generated by a unit vector $u=(a,b,c)^t$ through an angle $\alpha\in [0,2\pi)$, and  to the symmetry $s$ in $\mathbb{R}^3$ on the plane perpendicular to  $u$. Matrix $S$ depends on  $a,b,c$  and matrix $R$  depends  on  $a,b,c, \cos \alpha$ and $\sin \alpha$. We show $SR=RS$. The matrix $R$ is due to Alperin.
\end{abstract}

\MSC{{15B10.}}  
\keywords{{matriz ortogonal; rotaci\'{o}n; simetr\'{\i}a; proyecci\'{o}n}}
\par
\bigskip



En los libros de \'{A}lgebra Lineal   al uso a nivel universitario, se encuentran diversos problemas en el espacio eucl\'{\i}deo $\R^3$ del siguiente tipo:
 \begin{enumerate}


 \item dados la recta  $E$ y el \'{a}ngulo $\alpha\in [0,2\pi)$,    hallar la matriz $R$ de la rotaci\'{o}n (o giro) en $\R^3$ alrededor del eje $E$, de amplitud $\alpha$ y sentido positivo,

 \item dado el plano  $H$,  hallar la matriz $S$ de la simetr\'{\i}a (o reflexi\'{o}n) en $\R^3$ sobre $H$,

\end{enumerate}
donde $E$, $\alpha$ y $H$ son  datos concretos.  Llamaremos a estos \emph{problemas directos}. Bajo hip\'{o}tesis  adecuadas, las matrices $R,S$ resultan ser ortogonales, i.e., $R^{-1}=R^t$ y $S^{-1}=S^t$.
Asimismo encontramos los \emph{problemas inversos}: dada la matriz ortogonal $M\in M_3(\R)$, averiguar si
$M$ representa una rotaci\'{o}n alrededor de un eje, o una simetr\'{\i}a, o la composici\'{o}n de las anteriores, indicando en cada caso los
elementos geom\'{e}tricos asociados (eje  $E$, amplitud $\alpha\in [0,2\pi)$  de la rotaci\'{o}n, plano   $H$ de
la simetr\'{\i}a, etc.) \footnote{Podemos hablar tambi\'{e}n de rotaci\'{o}n de amplitud $\alpha\in [0,\pi]$ con sentido positivo o negativo. La equivalencia es clara: la rotaci\'{o}n de amplitud $\alpha$ y sentido negativo coincide con  la rotaci\'{o}n de amplitud $2\pi-\alpha$ y sentido positivo.}

\medskip
En esta nota abordamos las preguntas anteriores en  general, lo que sirve  para responder    tanto
problemas directos como inversos. Obtendremos $S\in M_3(F_1)$ y  $R\in M_3(F_2)$, para cierta extensi\'{o}n de cuerpos $$\Q\subseteq F\subseteq F_1\subseteq F_2\subseteq \R,$$ donde $F$  es el cuerpo base del problema.

\medskip
Trabajaremos en un espacio vectorial real $V$ de dimensi\'{o}n 3, dotado de un producto escalar $\langle, \rangle$
 y usaremos siempre bases ortonormales. Las coordenadas de los vectores de $V$ (respecto de cualquier base)
 se escribir\'{a}n en columna. Para fijar ideas, el lector puede tomar $V=\R^3$ con  el producto escalar
 usual $\langle, \rangle$,  el producto vectorial usual $\wedge$ y la base can\'{o}nica $\{\suctres e\}$.
Recordemos la conocida igualdad
\begin{equation}\label{eqn:igualdad}
u\wedge(v\wedge w)=v\langle u,w\rangle -w\langle u,v\rangle,
\end{equation}
cuya demostraci\'{o}n (usando coordenadas) es un sencillo ejercicio.

\medskip
Fijemos una base ortonormal $\B$ de $V$.
Sean $(a,b,c)^t$  las coordenadas, respecto de $\B$, de un vector unitario $u$ que genere $E$ (i.e., $\langle u,u \rangle = a^2+b^2+c^2=1$). Denotemos por
$p_E$ la proyecci\'{o}n ortogonal de $\R^3$ sobre $E$.
 Denotemos por $r_{E,\alpha}$  y $s_{H}$ la rotaci\'{o}n y   simetr\'{\i}a  descritas m\'{a}s arriba,
donde el plano $H$, de ecuaci\'{o}n $ax+by+cz=0$, es precisamente $E^{\perp}$.

\begin{lemma}
La matriz de $p_E$ respecto de $\B$
es
\begin{equation}\label{eqn:pro}
A=\left(\begin{array}{ccc}
        a^2&ab&ac\\
        ab&b^2&bc\\
        ac&bc&c^2
    \end{array}\right)\in M_3(\Q(a,b,c)).
\end{equation}
\end{lemma}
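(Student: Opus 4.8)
The plan is to use the closed form of the orthogonal projection onto a line. Since $E=\langle u\rangle$ with $u$ unitary, every $v\in V$ decomposes uniquely as $v=p_E(v)+w$ with $p_E(v)\in E$ and $w\in E^{\perp}$. I would write $p_E(v)=\lambda u$ and determine the scalar $\lambda$ from the orthogonality condition $\langle v-\lambda u,u\rangle=0$; because $\langle u,u\rangle=a^2+b^2+c^2=1$, this gives at once
\[
p_E(v)=\langle u,v\rangle\,u .
\]

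The second step is to read off $A$ column by column. By definition, the $j$-th column of the matrix of $p_E$ with respect to $\B$ is the coordinate vector of $p_E$ evaluated on the $j$-th vector $e_j$ of $\B$. Because $\B$ is orthonormal, $\langle u,e_j\rangle$ is exactly the $j$-th coordinate of $u$, namely $a$, $b$ or $c$; hence $p_E(e_1)=a\,u$, $p_E(e_2)=b\,u$ and $p_E(e_3)=c\,u$. Writing the coordinates of these three vectors as successive columns reproduces precisely the matrix in (\ref{eqn:pro}). Equivalently and more compactly, the identity $p_E(v)=\langle u,v\rangle\,u=u\,(u^t v)=(u\,u^t)\,v$ shows that the matrix of $p_E$ is the outer product $u\,u^t$, whose expansion is $A$.

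Finally, each entry of $A$ is a degree-two monomial in $a,b,c$ with integer coefficients, so every entry lies in $\Q(a,b,c)$ and therefore $A\in M_3(\Q(a,b,c))$, as claimed.

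The argument is routine, so I do not expect a genuine obstacle. The only point that deserves care is the appeal to orthonormality of $\B$: this is exactly what makes $\langle u,e_j\rangle$ equal to the coordinate $a$, $b$ or $c$, and what lets me identify the intrinsic expression $p_E(v)=\langle u,v\rangle u$ with its coordinate form $(u\,u^t)v$ without having to insert a Gram matrix. With a non-orthonormal basis the clean outer-product shape would be lost, so I would emphasize that hypothesis explicitly.
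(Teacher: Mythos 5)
Your proof is correct and follows essentially the same route as the paper: both rest on the identity $p_E(v)=\langle u,v\rangle\,u$ (valid because $u$ is unitary) and then identify the matrix with the outer product $u\,u^t=A$. The only difference is that you derive that identity from the orthogonal decomposition and also read off the columns $p_E(e_j)$ explicitly, whereas the paper simply quotes the formula and computes $A\,(x,y,z)^t$ directly; this is added detail, not a different argument.
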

\begin{proof}
Si $v\in V$ es un vector arbitrario, sabemos que $p_E(v)=u\langle u,v\rangle$, ya que $u$ es unitario. Por tanto,
si las coordenadas de $v$ respecto de $\B$ son $(x,y,z)^t$,  tenemos
$$p_E(v)=\col a b c  (a,b,c) \col x y z=
      A \col x y z.$$
\end{proof}

\begin{theorem}
La matriz de $r_{E,\alpha}$  es
\begin{equation}\label{eqn:rot}
R=I+(\sen \alpha)B+(\cos \alpha-1)(I-A)\in M_3(\Q(a,b,c,\cos\alpha,\sen \alpha)),
\end{equation}
y la matriz de $s_{E^{\perp}}$  es
\begin{equation}\label{eqn:sim}
S=I-2A\in M_3(\Q(a,b,c)),
\end{equation}
donde   $I$ denota la matriz identidad de orden 3 y
$$B=
    \left(\begin{array}{rrr}
        0&-c&b\\
        c&0&-a\\
        -b&a&0
    \end{array}\right).$$
\end{theorem}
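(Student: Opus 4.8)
The plan is to push every vector through the orthogonal decomposition $v = Av + (I-A)v$, where, by the Lemma, $Av = u\langle u,v\rangle$ is the component of $v$ on the axis $E$ and $(I-A)v$ is its component in the plane $H = E^{\perp}$. Two preliminary facts about $B$ do all the bookkeeping. First, a direct comparison of coordinates shows that $B$ is the matrix of the cross product by $u$, i.e. $Bv = u\wedge v$ for all $v$. Second, since $Av$ is a scalar multiple of $u$, we get $BAv = u\wedge(u\langle u,v\rangle) = \langle u,v\rangle\,(u\wedge u) = 0$, so $BA = 0$ and hence $B(I-A) = B - BA = B$. Thus $B$ annihilates the axial part and simply reproduces the cross product of the planar part.

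The reflection is immediate. By definition $s_{E^{\perp}}$ fixes the planar part $(I-A)v$ and reverses the axial part $Av$, so $s_{E^{\perp}}(v) = (I-A)v - Av = (I-2A)v$, which is (\ref{eqn:sim}); its entries are polynomials in $a,b,c$, matching the displayed ring.

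For the rotation, $r_{E,\alpha}$ fixes the axial part $Av$ and rotates the planar part $w := (I-A)v$ by the angle $\alpha$ inside $H$. The geometric core is that, for $w\in H\setminus\{0\}$, the pair $\{w,\,u\wedge w\}$ is an orthogonal basis of $H$ of equal length: orthogonality is $\langle w,u\wedge w\rangle = 0$, equal length is $|u\wedge w|^2 = |w|^2$ (Lagrange, using $|u|=1$ and $w\perp u$), and applying (\ref{eqn:igualdad}) with $v=u$ gives $u\wedge(u\wedge w) = -w$, so crossing with $u$ acts as a quarter turn on $H$. Consequently the rotation sends $w$ to $(\cos\alpha)\,w + (\sen\alpha)\,(u\wedge w)$. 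Replacing $u\wedge w = Bw = B(I-A)v = Bv$ (using $B(I-A)=B$) yields $r_{E,\alpha}(v) = Av + (\cos\alpha)(I-A)v + (\sen\alpha)\,Bv$, so $R = A + (\cos\alpha)(I-A) + (\sen\alpha)B$. Writing $A = I-(I-A)$ turns this into $R = I + (\sen\alpha)B + (\cos\alpha-1)(I-A)$, which is (\ref{eqn:rot}); the entries are now rational in $a,b,c,\cos\alpha,\sen\alpha$, as claimed.

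I expect the only delicate point to be the orientation, i.e. the sign in front of $\sen\alpha$, which must match the prescribed (positive) sense of rotation. This reduces to checking that $\{w/|w|,\,(u\wedge w)/|w|,\,u\}$ is a positively oriented orthonormal frame, so that turning $w$ toward $u\wedge w$ is the intended direction. I would settle the convention once and for all on the special case $u=(0,0,1)^{t}$: there the formula gives $Re_{1} = (\cos\alpha)e_{1} + (\sen\alpha)e_{2}$, $Re_{2} = -(\sen\alpha)e_{1} + (\cos\alpha)e_{2}$ and $Re_{3}=e_{3}$, the familiar planar rotation about the third axis, which pins down both the sign and the formula.
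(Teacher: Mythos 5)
Your proof is correct, and while it rests on the same two pillars as the paper's (the identification of $B$ with the map $v\mapsto u\wedge v$, and the fact that crossing with $u$ acts as a quarter turn on $H=E^{\perp}$ via $u\wedge(u\wedge w)=-w$ from (\ref{eqn:igualdad})), it is organized along a genuinely different route in two places. For the rotation, the paper proves (\ref{eqn:rot}) by writing down the candidate map $f={\id}+(\sen \alpha)\,g_u+(\cos \alpha-1)\,p_{E^{\perp}}$ and checking that it agrees with $r_{E,\alpha}$ on the adapted basis $\{u,v,u\wedge v\}$, passing to matrices through the identity $-B^2=I-A$ (equation (\ref{eqn:menos_cuadrado})); you instead derive the formula forwards, by decomposing an arbitrary vector as $v=Av+(I-A)v$ and using $BA=0$, $B(I-A)=B$, which lets you dispense with $-B^2=I-A$ entirely (the paper still profits from that identity later, in observation \ref{item:dos}, to establish orthogonality of $R$). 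The sharper divergence is the symmetry: the paper builds an explicit orthogonal matrix $P$ with columns $u$, $h^{-1}(-b,a,0)^t$ and $u\wedge v$, where $h=\sqrt{a^2+b^2}$, and computes $S=P\,\diag(-1,1,1)\,P^t=I-2A$, whereas you simply read off $s_{E^{\perp}}(v)=(I-A)v-Av=(I-2A)v$ from the orthogonal decomposition. Your version is shorter, makes the relation (\ref{eqn:sim_proy}) transparent rather than a corollary, and has a genuine technical advantage: it does not break down in the degenerate case $a^2+b^2=0$ (i.e., $u=(0,0,\pm 1)^t$), where the paper's matrix $P$ is undefined and its argument would need a separate, if easy, patch. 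Finally, your explicit discussion of the orientation convention, pinned down by the test case $u=(0,0,1)^t$ (where $R$ reduces to the standard planar rotation about the third axis), addresses the sign of $\sen\alpha$ — a point the paper settles only implicitly by its choice of action on the basis $\{u,v,u\wedge v\}$ — so this is an addition, not a defect.
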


\begin{proof}
La demostraci\'{o}n de (\ref{eqn:rot}) requiere varios pasos.
Para empezar, observemos que la aplicaci\'{o}n $g_u:V \to V$ tal que $v\mapsto u\wedge v$ es lineal. Adem\'{a}s,
la matriz de $g_u$ respecto de $\B$ es $B$.  Comprobamos que  $-B^2=I-A$, lo que significa que
\begin{equation}\label{eqn:menos_cuadrado}
-g_u^2={\id}-p_E=p_{E^{\perp}}
\end{equation}
ya que  las proyecciones sobre $E$ y sobre $E^{\perp}$ son complementarias.
A continuaci\'{o}n veamos que
\begin{equation}\label{eqn:rot_apl}
r_{E,\alpha}={\id} +(\sen \alpha) g_u+(\cos \alpha-1) p_{E^\perp},
\end{equation}
de donde se deducir\'{a} (\ref{eqn:rot}), gracias a (\ref{eqn:menos_cuadrado}) y al lema. Escribamos $f={\id} +(\sen \alpha)g_u+(\cos \alpha-1)p_{E^\perp}$ y demostremos
que $f$ y $r_{E,\alpha}$ act\'{u}an igual sobre los elementos de cierta base $\B'$ de $V$. Tomemos cualquier
vector  unitario $v$  perpendicular a $u$ y sea $\B'=\{u,v, u\wedge v\}$. Unos c\'{a}lculos sencillos (usando $u\wedge(u\wedge v)= u\langle u,v  \rangle  -v\langle u,u\rangle =-v$ a partir de (\ref{eqn:igualdad}))
nos muestran
que $f(u)=u$, $f(v)=(\cos \alpha)v+(\sen \alpha)( u\wedge v)$ y
$f(u\wedge v)=-(\sen \alpha)v+(\cos \alpha)( u\wedge v)$, de donde se sigue la igualdad (\ref{eqn:rot_apl}) y, con ella (\ref{eqn:rot}).

\medskip
Ahora tomemos  el vector $v=h^{-1}(-b,a,0)^t$ y consideramos la base
$\B'=\{u,v, u\wedge v\}$,
donde $h=\sqrt {a^2 +b^2}$. La matriz de $s_{E^\perp}$ respecto de $\B'$ es $D=\diag(-1,1,1)$, ya que $v$ y
$u\wedge v$ son  perpendiculares a $u$. Por tanto, la matriz de $s_{E^\perp}$ respecto de $\B'$ es
$S=PDP^{-1}$, donde
$$P=\left(\begin{array}{ccc}
a&-b/h&-ac/h\\
b&a/h&-bc/h\\
c&0&h
\end{array}\right)$$
 es matriz ortogonal (i.e., $P^{-1}=P^t$). Un c\'{a}lculo sencillo proporciona $S=PDP^t=I-2A$, que es la
 expresi\'{o}n (\ref{eqn:sim}).
\end{proof}

\begin{corollary}
Llamemos \emph{roto--simetr\'{\i}a}
a la composici\'{o}n $s_{E^\perp}\circ r_{E,\alpha}=r_{E,\alpha}\circ s_{E^\perp}$. Su matriz
es
\begin{equation}\label{eqn:rs}
SR=RS=S+(\sen \alpha)B+(\cos \alpha-1)(I-A)\in M_s(\Q(a,b,c,\cos \alpha,\sen \alpha)).
\end{equation}
\end{corollary}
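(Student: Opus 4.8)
The plan is to prove the Corollary by a direct algebraic computation with the explicit matrices $R=I+(\sen\alpha)B+(\cos\alpha-1)(I-A)$ and $S=I-2A$ furnished by the Theorem, reducing everything to a few structural identities satisfied by $A$ and $B$. First I would record the two easy facts about the projection matrix $A$: it is symmetric, $A^t=A$ (by inspection of its entries), and idempotent, $A^2=A$, the latter being immediate since $A$ represents the orthogonal projection $p_E$ and $p_E\circ p_E=p_E$.

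The crucial relations are $AB=BA=0$. I would obtain these geometrically from the identifications of the Theorem: $A$ represents $p_E$ and $B$ represents $g_u$, where $g_u(v)=u\wedge v$. Then $p_E(g_u(v))=u\langle u,u\wedge v\rangle=0$ because $u\wedge v$ is perpendicular to $u$, which gives $AB=0$; and $g_u(p_E(v))=\langle u,v\rangle\,(u\wedge u)=0$, which gives $BA=0$. (Alternatively, $BA=0$ follows from $AB=0$ upon transposing, since $A^t=A$ and $B^t=-B$.) This step is where the hypothesis $\langle u,u\rangle=1$ and the perpendicularity of the axis $E$ to the reflection plane $E^{\perp}$ genuinely enter, and it is the only part of the argument that is not mere bookkeeping; everything afterwards is routine expansion.

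With these identities in hand the computation collapses. Expanding $SR=(I-2A)R$, the $B$-term yields $(\sen\alpha)(B-2AB)=(\sen\alpha)B$ by $AB=0$, while the last term yields $(\cos\alpha-1)(I-2A)(I-A)=(\cos\alpha-1)(I-A)$ because $(I-2A)(I-A)=I-3A+2A^2=I-A$ by $A^2=A$. Hence $SR=(I-2A)+(\sen\alpha)B+(\cos\alpha-1)(I-A)=S+(\sen\alpha)B+(\cos\alpha-1)(I-A)$, which is exactly the claimed formula. Expanding $RS=R(I-2A)$ symmetrically, the term $B(I-2A)=B-2BA=B$ uses $BA=0$ and $(I-A)(I-2A)=I-A$ again uses $A^2=A$, so $RS$ reduces to the very same expression. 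Therefore $SR=RS=S+(\sen\alpha)B+(\cos\alpha-1)(I-A)$, proving the Corollary. The commutativity is also transparent geometrically, since a reflection in the plane $E^{\perp}$ preserves the axis $E$ about which $r_{E,\alpha}$ turns, so the two maps must commute.
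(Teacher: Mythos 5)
Your proof is correct and takes essentially the same approach as the paper: both arguments expand $SR$ and $RS$ term by term and reduce the claim to absorption identities between $S$ and the building blocks of $R$. The paper simply states $SB=B=BS$ and $S(I-A)=I-A=(I-A)S$ as immediate verifications, and these are precisely the relations your identities $AB=BA=0$ and $A^2=A$ establish, so the two proofs differ only in which level of identity is taken as the starting point.
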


\begin{proof}
Basta ver que   rotaci\'{o}n y simetr\'{\i}a  conmutan, y esto es cierto ya que  $SB=B=BS$ y
$S(I-A)=I-A=(I-A)S$, igualdades de comprobaci\'{o}n inmediata.
\end{proof}

\medskip
 Observaciones:
 \begin{enumerate}
 \item  \label{item:uno} En (\ref{eqn:sim}) tenemos $S=I-2A$, de donde se deduce la conocida relaci\'{o}n (ver figura \ref{fig})
    \begin{equation}\label{eqn:sim_proy}
     s_{E^{\perp}}={\id} - 2p_{E}.
     \end{equation}
 \item   El rango de la matriz $A$ es 1 y $A$ no es ortogonal. Se verifica $A^2=A=A^t$ e $(I-A)^2=I-A$. La
 matriz $B$ es antisim\'{e}trica y $-B^2=I-A$.  De aqu\'{\i} se sigue que las matrices  $R$ y $S$
 son ortogonales, i.e., $RR^t=I=SS^t=S^2$. \label{item:dos}
 \item Los determinantes y las trazas  valen ${\det} R=1$, ${\det} S={\det} (SR)=-1$, ${\tr} R=1+2\cos \alpha$, ${\tr} S=1$ y ${\tr} (SR)=-1+2\cos \alpha$. Determinante y traza son valores  invariantes de una isometr\'{\i}a de $\R^3$, pero NO la caracterizan en general (salvo que la traza valga 1). En efecto, el determinante nos dice si la isometr\'{\i}a  conserva o invierte la orientaci\'{o}n y la traza proporciona el valor $\cos \alpha$.  Queda, pues, por determinar el signo de $\sen \alpha=\pm \sqrt{\ 1-\cos ^2 \alpha}$. \label{item:tres}
 \end{enumerate}

\begin{figure}[h]
\centering
\includegraphics[width=8cm]{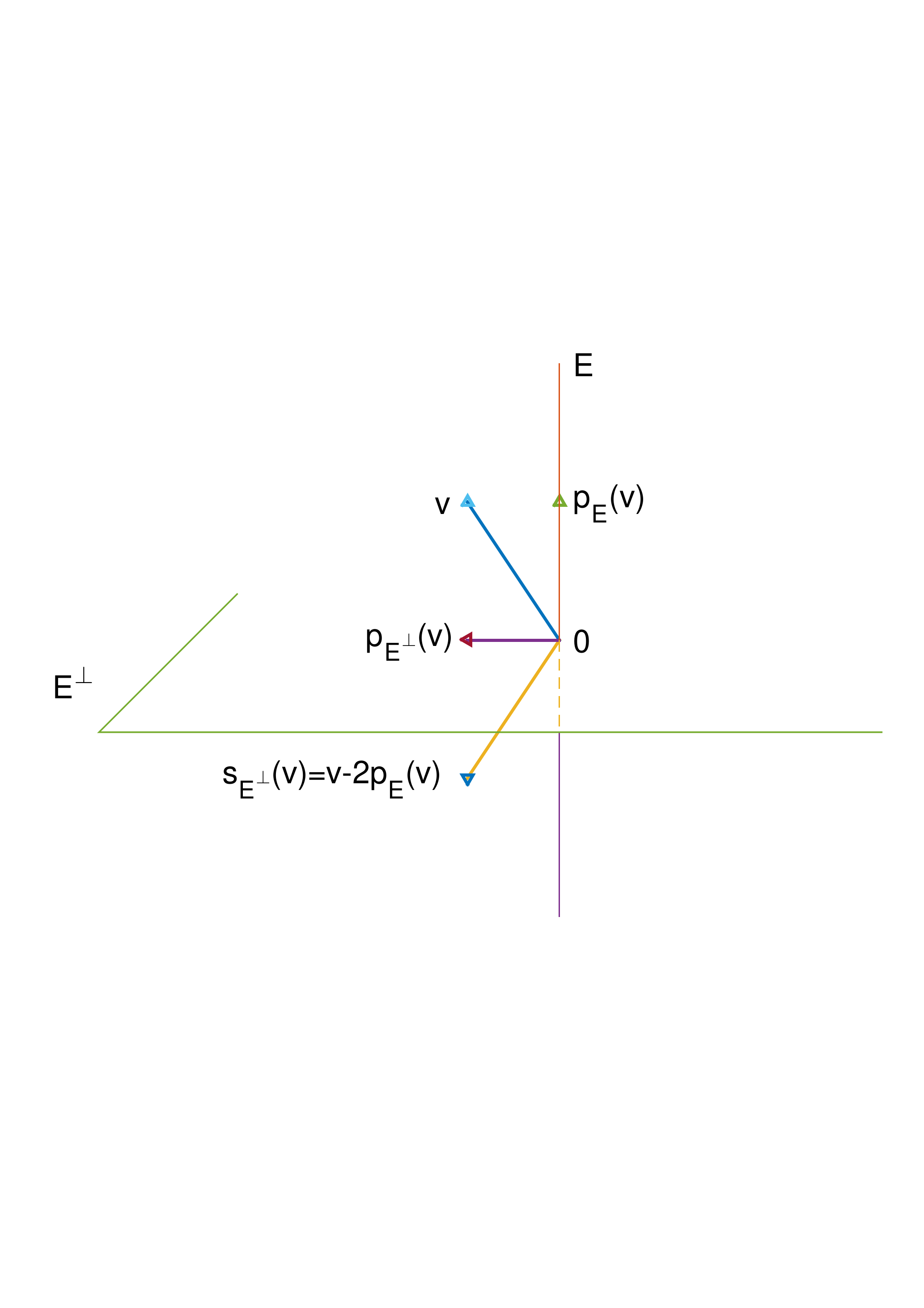}\\
\caption{Simetr\'{\i}a respecto del plano perpendicular a la recta  $E$.}
\label{fig}
\end{figure}

Ejemplo:
 Una matriz ortogonal sencilla (pero no trivial) es
$$M=\frac{1}{pq}\left(\begin{array}{rrr}
p&q&1\\
p&-q&1\\
p&0&-2
\end{array}\right)\in M_3(F),$$
donde $p=\sqrt{2}$ y $q=\sqrt{3}$ y $F=\Q(p,q)$ es el cuerpo base.
Como $MM^t=I$ y  $\det M=1$, sabemos que  $M$ representa una rotaci\'{o}n alrededor de un eje $E$. Vamos a hallar un
vector unitario $u=(a,b,c)^t$ que genere $E$ y la  amplitud de giro
$\alpha\in [0,2\pi)$ en sentido positivo. Ciertos c\'{a}lculos proporcionan
$$a=\frac{p+q}{n},\quad b=\frac{2-p-q+pq}{n}, \quad c=\frac{1}{n},$$
con $n=\sqrt{21-10p-8q+8pq}$.
Como la traza es invariante sabemos, por la observaci\'{o}n \ref{item:tres}, que
$$1+2\cos \alpha={\tr} M=\frac{-2+p-q}{pq}=-\frac{p}{2}+\frac{q}{3}-\frac{pq}{3},$$ de donde
$$\cos \alpha=-\frac{1}{2}-\frac{p}{4}+\frac{q}{6}-\frac{pq}{6},\quad
\sen \alpha=\pm \sqrt{1-\cos^2 \alpha}=\pm\frac{pqr}{12},$$ con $r=\sqrt{9-2p-2pq}$.
Sustituimos estos valores en (\ref{eqn:rot}) y  obtenemos la igualdad $M=R$ cuando el signo del seno es NEGATIVO, i.e.,  $\sen \alpha=-pqr/12$. Conocidos $\sen \alpha$ y $\cos \alpha$, deducimos que $\alpha\in (\pi,2\pi/3)$; concretamente $\alpha\simeq 193 ^\circ 20'$.

\medskip

Los libros de texto suelen  usar otro procedimiento para calcular $\alpha$. Hallan la
amplitud
$$\alpha=\arccos \left(-\frac{1}{2}-\frac{p}{4}+\frac{q}{6}-\frac{pq}{6}\right),$$ que tiene DOS soluciones,
$\alpha_1, \alpha_2\in [0,2\pi)$,  con $\alpha_1\le\alpha_2=2\pi-\alpha_1$. Luego determinan, mediante alg\'{u}n
argumento geom\'{e}trico, si $\alpha=\alpha_1$ \'{o}  $\alpha=\alpha_2$.
En cambio, nuestro razonamiento se ha basado en las f\'{o}rmulas (\ref{eqn:rot}), (\ref{eqn:sim}) y (\ref{eqn:rs}), trabajando   en la extensi\'{o}n algebraica $\Q(p,q,n,r)$ del cuerpo  base $\Q(p,q)$.
En nuestro caso tenemos $\cos \alpha\simeq -0.973126$, $\sen \alpha\simeq -0.230270$ y
$\alpha=\alpha_2\simeq 193 ^\circ 20'$.

\medskip
Agradecimientos: He tomado la matriz $R$ (as\'{\i} como su obtenci\'{o}n) de la parte debida a Roger C. Alperin
en el libro \cite{gems}, p. 113--114.   Recomiendo vivamente este texto a  todos los profesores universitarios
de Algebra Lineal: en el encontrar\'{a}n verdaderas joyas.

\thebibliography{C}

\bibitem{gems} D. Carlson et al. (eds.): \em Linear algebra gems. Assets for undergraduate mathematics\em. MAA Notes series {\bf 59} (2002), ISBN:0--88385--170--9.

\endDocument 